\documentclass[english]{amsart}
\usepackage[T1]{fontenc}
\usepackage[latin9]{inputenc}
\usepackage{mathrsfs}
\usepackage{amsthm}
\usepackage{amsmath}
\usepackage{amssymb}

\makeatletter
\theoremstyle{plain}
\newtheorem{thm}{\protect\theoremname}
  \theoremstyle{plain}
  \newtheorem{lem}[thm]{\protect\lemmaname}

\makeatother

\usepackage{babel}
  \providecommand{\lemmaname}{Lemma}
\providecommand{\theoremname}{Theorem}

\begin{document}

\title[Nondegeneracy of critical points of the mean curvature]{Nondegeneracy of critical points 
of the mean curvature of the boundary for Riemannian manifolds}

\author{Marco Ghimenti}
\address[Marco Ghimenti] {Dipartimento di Matematica,
  Universit\`{a} di Pisa, via F. Buonarroti 1/c, 56127 Pisa, Italy}
\email{marco.ghimenti@dma.unipi.it.}
\author{Anna Maria Micheletti}
\address[Anna Maria Micheletti] {Dipartimento di Matematica,
  Universit\`{a} di Pisa, via F. Buonarroti 1/c, 56127 Pisa, Italy}
\email{a.micheletti@dma.unipi.it.}

\begin{abstract}
Let $M$ be a compact smooth Riemannian manifold of finite dimension
$n+1$ with boundary $\partial M$and $\partial M$ is a compact $n$-dimensional
submanifold of $M$. We show that for generic Riemannian metric $g$,
all the critical points of the mean curvature of $\partial M$ are
nondegenerate.
\end{abstract}

\subjclass[2010]{53A99,53C21}
\date{\today}
\keywords{mean curvature, nondegenerate critical points, manifolds with boundary}

\maketitle

Let $M$ be a connected compact $C^{m}$ manifold with $m\ge3$ of
finite dimension $n+1$ with boundary $\partial M$. The boundary
$\partial M$ is a compact $C^{m}$ $n$-dimensional submanifold of
$M$. Let $\mathscr{M}^{m}$ be the set of all $C^{m}$ Riemannian
metrics on $M$. Given a metric $\bar{g}\in\mathscr{M}^{m}$, we consider
the mean curvature of the boundary $\partial M$ of $(M,\bar{g})$. 
Our goal is to prove that generically for a Riemannian metric $g$
all the critical points of the mean curvature of the boundary $\partial M$
of $(M,g)$ are nondegenerate. More precisely we show the following
result
\begin{thm}
\label{thm:main}The set 
\begin{equation}
{\mathscr{A}}=\left\{ \begin{array}{c}
g\in\mathscr{M}^{m}:\text{ all the critical points of the}\\
\text{mean curvature of the boundary}\\
\text{of }(M,g)\text{ are nondegenerate}
\end{array}\right\} \label{eq:A}
\end{equation}
is an open dense subset of $\mathscr{M}^{m}$.
\end{thm}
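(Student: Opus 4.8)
The plan is to show that $\mathscr{A}$ is open and dense separately. Openness is elementary; density is a Sard--Smale transversality argument whose heart is a perturbation lemma within the conformal class of the metric.

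\emph{Openness.} Fix $g_0\in\mathscr{A}$. Since $\partial M$ is compact and every critical point of the mean curvature $H_{g_0}$ is nondegenerate, the critical set is finite, say $\{x_1,\dots,x_p\}$. The assignment $g\mapsto H_g$ is continuous from $\mathscr{M}^m$ into $C^{2}(\partial M)$ (here $m\ge 3$ is used), and at each $x_i$ the Hessian of $H_{g_0}$ is invertible. Picking disjoint boundary coordinate balls $B_i\ni x_i$ on which $|dH_{g_0}|$ is bounded below on $\partial B_i$, applying the implicit function theorem to $(y,g)\mapsto\nabla_y H_g(y)$ near $(x_i,g_0)$, and using a compactness lower bound for $|dH_{g_0}|$ on $\partial M\setminus\bigcup_i B_i$, one sees that for $g$ in a $C^m$-neighbourhood of $g_0$ the function $H_g$ has exactly $p$ critical points, one near each $x_i$, each still with invertible Hessian; hence that neighbourhood lies in $\mathscr{A}$.

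\emph{Density.} Cover $\partial M$ by finitely many charts $\phi_k\colon U_k\to\mathbb{R}^n$, each the restriction of a boundary chart of $M$ in which $\partial M=\{s=0\}$. For $U=U_k$ define the $C^1$ map of separable Banach manifolds
\[
F_U\colon U\times\mathscr{M}^m\to\mathbb{R}^n,\qquad F_U(y,g)=\bigl(\partial_{y^1}(H_g\circ\phi^{-1})(y),\dots,\partial_{y^n}(H_g\circ\phi^{-1})(y)\bigr),
\]
noting that $\mathscr{M}^m$ is an open subset of the separable Banach space of $C^m$ symmetric $2$-tensors and that $m\ge 3$ is exactly what keeps $F_U$ of class $C^1$ despite the derivative loss in $g\mapsto H_g$. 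A point $x=\phi^{-1}(y)$ is a critical point of $H_g$ iff $F_U(y,g)=0$, and it is nondegenerate iff the $n\times n$ matrix $D_yF_U(y,g)$ is invertible; consequently $g\in\mathscr{A}$ iff $0$ is a regular value of $F_U(\cdot,g)$ for every $k$. By the Sard--Smale theorem (applicable since $U$ is finite dimensional), for each $k$ the set of $g$ with $0$ a regular value of $F_U(\cdot,g)$ is residual in $\mathscr{M}^m$; intersecting these finitely many residual sets yields $\mathscr{A}$ as a residual, hence dense, subset. The whole density argument therefore reduces to the following claim: \emph{$0$ is a regular value of $F_U\colon U\times\mathscr{M}^m\to\mathbb{R}^n$.} To prove it, let $F_U(y_0,g_0)=0$, put $x_0=\phi^{-1}(y_0)$, and use only conformal perturbations $g_t=e^{2tf}g_0$. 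The classical conformal-change formula $\tfrac{d}{dt}\big|_{t=0}H_{g_t}=-fH_{g_0}-n\,\partial_{\nu_{g_0}}f$ on $\partial M$ (the sign of the last term depending only on the orientation convention for $\nu_{g_0}$), after differentiating along $\partial M$ and using that $x_0$ is a critical point of $H_{g_0}$ so that the term involving $dH_{g_0}(x_0)$ drops out, gives
\[
D_gF_U(y_0,g_0)[\,2fg_0\,]=\Bigl(-H_{g_0}(x_0)\,\partial_{y^k}(f\circ\phi^{-1})(y_0)-n\,\partial_{y^k}\bigl((\partial_{\nu_{g_0}}f)\circ\phi^{-1}\bigr)(y_0)\Bigr)_{k=1,\dots,n}.
\]
Choosing $f(s,y)=s\,\langle a,\,y-y_0\rangle\,\chi(s,y)$ with $a\in\mathbb{R}^n$ arbitrary and $\chi$ a cutoff equal to $1$ near $x_0$, one has $f(x_0)=0$, all tangential derivatives of $f$ vanish at $x_0$, and $\partial_{y^k}(\partial_{\nu_{g_0}}f)(x_0)=\nu_{g_0}^s(x_0)\,a_k$ with $\nu_{g_0}^s(x_0)\neq 0$ because $\nu_{g_0}$ is transverse to $\{s=0\}$. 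Hence $D_gF_U(y_0,g_0)[2fg_0]=-n\,\nu_{g_0}^s(x_0)\,a$ sweeps out all of $\mathbb{R}^n$, so even $D_gF_U(y_0,g_0)$ alone is surjective; this proves the claim and hence the theorem.

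\emph{Main obstacle.} The routine-but-delicate parts are the Banach-manifold and Fredholm bookkeeping needed to invoke Sard--Smale (in particular the verification that $F_U$ is genuinely $C^1$ and that the projection of $F_U^{-1}(0)$ to $\mathscr{M}^m$ is Fredholm of index $0$) and the uniform estimates underlying openness. The conceptual crux, however, is the transversality claim, i.e. exhibiting enough freedom in the metric to prescribe the full gradient $\nabla H_g(x_0)$ to first order; the point that makes this short is the twofold observation that conformal perturbations already suffice and that at a critical point the obstructive term in the variation formula disappears.
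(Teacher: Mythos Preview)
Your argument is correct and follows the same overall architecture as the paper: openness via the implicit function theorem plus compactness, and density via a parametric transversality/Sard--Smale argument applied to the map $(x,g)\mapsto\nabla_xH_g(x)$ in boundary charts, reducing everything to the surjectivity of the metric-derivative at zeros. The genuine difference lies in how that surjectivity is obtained. The paper (its Lemmas~\ref{lem:derivate} and~\ref{lem:ii}) computes the full linearisation $F'_k(\tilde k,\tilde x)[k]$ in Fermi coordinates and then exhibits, for each coordinate direction $\nu$, a specific non-conformal tensor perturbation $k_{\nu\nu}(x,t)=x_\nu t$ (all other components zero) that hits $e_\nu$. You instead restrict to conformal perturbations $g_t=e^{2tf}g_0$, invoke the classical conformal-change formula for the mean curvature, and choose $f(s,y)=s\langle a,y-y_0\rangle\chi$; the critical-point condition kills the $f\,dH_{g_0}$ term and you read off surjectivity from $\partial_{y^k}\partial_\nu f(x_0)=\nu^s(x_0)a_k$. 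Your route is shorter and more conceptual, avoids the six-term coordinate expansion, and in fact shows the stronger fact that conformal deformations alone suffice for transversality (hence the genericity statement already holds within each conformal class). The paper's explicit computation, on the other hand, is entirely self-contained and does not appeal to the conformal-change formula as a black box. One minor point: you apply Sard--Smale over the whole of $\mathscr{M}^m$, whereas the paper works over a small ball $\mathscr{B}_\rho$ of perturbations around a fixed $\bar g$; this makes the $\sigma$-properness check in the paper immediate (compact parameter ball times compact coordinate ball), and you would do well to localise similarly rather than leave that verification entirely to the ``routine-but-delicate'' remark.
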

We denote by $\mathscr{S}^{m}$ the space of all $C^{m}$ symmetric
covariant $2$-tensors on $M$ and $\mathscr{B}_{\rho}$ the ball
in $\mathscr{S}^{m}$ of radius $\rho$. The set $\mathscr{M}^{m}$
of all $C^{m}$ Riemannian metrics on $M$ is an open subset of $\mathscr{S}^{m}$. 

A possible application of this result arises in the study of the following
Neumann problem
\begin{equation}
\left\{ \begin{array}{cc}
-\varepsilon\Delta_{g}u+u=u^{p-1} & \text{in }M\\
u>0 & \text{in }M\\
{\displaystyle \frac{\partial u}{\partial\nu}=0} & \text{on }\partial M
\end{array}\right.\label{eq:P}
\end{equation}
where $p>2$ if $n=2$ and $2<p<2^{*}=\frac{2n}{n-2}$ if $n\ge3$,
$\nu$ is the external normal to $\partial M$ and $\varepsilon$
is a positive parameter. In \cite{BP} the authors prove that the
problem (\ref{eq:P}) has a mountain pass solution $u_{\varepsilon}$
which has a unique maximum point $\xi_{\varepsilon}\in\partial M$
converging, as $\varepsilon\rightarrow0$, to a maximum point of the
mean curvature of the boundary. Recently, in \cite{GM} a relation
between topology of the boundary $\partial M$ and the the number
of solutions is established. More precisely it has been proved that
Problem (\ref{eq:P}) has at least $\text{cat }\partial M$ non trivial
solution provided $\varepsilon$ small enough. In a forthcoming paper
\cite{G} the author shows that nondegenerate critical points of the
mean curvature of the boundary produce one peak solutions of problem
(\ref{eq:P}).

In our opinion the role of the mean curvature of the boundary in Problem
(\ref{eq:P}) on a manifold $M$ with boundary is similar to the role
of the scalar curvature in the problem 
\[
\left\{ \begin{array}{cc}
-\varepsilon\Delta_{g}u+u=u^{p-1} & \text{in }M\\
u>0 & \text{in }M
\end{array}\right.
\]
defined on a boundaryless manifold $M$. Recently in some papers \cite{DMP,MP09a,MP09b}
assuming a sort of non degeneracy of the critical points of the scalar
curvature of the boundaryless manifold $(M,g)$ some results of existence
of one peak and multipeak solutions have been proved. Moreover, in
\cite{MP10} it is proved that generically with respect to the metric
$g$ all the critical points of the scalar curvature are nondegenerate,
so the latter result of non degeneracy can be applied to the previous existence theorems.

\section{Setting of the problem}

In the following, with abuse of notation we often identify a point
in the manifold with its Fermi coordinates $(x_{1},\dots,x_{n},t)$.
We now recall the definition of Fermi coordinates.

Let $\bar{\xi}$ belongs to the boundary $\partial M$, let $\left(x_{1},\dots,x_{n}\right)$
be coordinates on the $n$ manifold $\partial M$ in a neighborhood
of a point $\bar{\xi}$. Let $\gamma(t)$ be the geodesic leaving
from $\bar{\xi}$ in the orthogonal direction to $\partial M$ and
parametrized by arc lenght. Then the set $ $$\left(x_{1},\dots,x_{n},t\right)$
are the so called \emph{Fermi coordinates} at $\bar{\xi}\in\partial M$
where $\left(x_{1},\dots,x_{n}\right)\in B_{\bar{g}}(0,R)$ and $0\le t<T$
for $R,T$ small enough. 

In these coordinates the arclenght $ds^{2}$ is written as 
\[
ds^{2}=dt^{2}+g_{ij}(x,t)dx_{i}dx_{j}\text{ for }i,j=1,\cdots,n.
\]
Also, we set $I(\bar{\xi},R)$ the neighborhood of $\bar{\xi}$ such
that, in Fermi coordinates, $|x|=|(x_{1},\dots,x_{n})|<R$ and $0\le t<R.$

If $\bar{g}$ is the metric of the manifold $M$ then $\text{det }\bar{g}=\mbox{\text{det }}(g_{ij})_{ij}$.

We denote by $h_{ij}(x,t)$ the second fundamental form of the submanifold
\[
\partial M_{t}=\left\{ (x,t)\ :\ x\in\partial M,\ 0\le t<T\right\} 
\]
for $T$ small enough. Moreover, $H^{\bar{g}}(x,t)$ is the trace
of the second fundamental form $h_{ij}(x,t)$ of the submanifold $\partial M_{t}$,
that is 
\begin{equation}
H^{\bar{g}}(x,t)=\left(\bar{g}(x,t)\right)^{ij}h_{ij}(x,t)\label{eq:traccia}
\end{equation}
By a well known result of Escobar \cite{Es92} we have that the second
fundamental form in a neighborhood of a point $\bar{\xi}\in\partial M$
can be expressed in term of the metric $\bar{g}$ of the manifold
in the following way 
\begin{equation}
\partial_{t}\bar{g}_{ij}(x,t)=-2h_{ij}(x,t).\label{eq:escobar}
\end{equation}
where $(x,t)$ are the Fermi coordinates centered at $\bar{\xi}$.

We denote by $\mathscr{S}^{m}$ the Banach space of all $C^{m}$ symmetric
covariant symmetric $2$-tensors on $M$. The norm $||\cdot||_{m}$
is defined in the following way. We fix a finite covering $\{V_{\alpha}\}_{\alpha\in L}$
of $M$ such that the closure of $V_{\alpha}$ is contained in $U_{\alpha}$
where $\{U_{\alpha},\psi_{\alpha}\}$ is an open coordinate neighborhood.
Let $V_{\alpha}\cap\partial M=\emptyset$. If $k\in\mathscr{S}^{m}$
we denote by $k_{ij}$ the components of $k$ with respect to the
coordinates $(x_{1},\dots,x_{n+1})$ on $V_{\alpha}$. We define 
\[
||k||_{m}=\sum_{\alpha\in L}\sum_{|\beta|\leq m}\sum_{i,j=1}^{n+1}
\sup_{\psi_{\alpha}(V_{\alpha})}\frac{\partial^{\beta}k_{i,j}}{\partial x_{1}^{\beta_{1}}\cdots\partial x_{m}^{\beta_{m}}}
\]
We proceed in analogous way when $V_{\alpha}\cap\partial M\ne\emptyset$,
by means of Fermi coordinates.

We recall and abstract result in transversality theory (see \cite{Qu70,ST79,Uh76})
which will be fundamental for our results. 
\begin{thm}
\label{thm:trans}Let $X,Y,Z$ be three real Banach spaces and let
$U\subset X,\ V\subset Y$ be two open subsets. Let $F$ be a $C^{1}$
map from $V\times U$ in to $Z$ such that 
\begin{enumerate}
\item For any $y\in V$, $F(y,\cdot):x\rightarrow F(y,x)$ is a Fredholm
map of index $0$. 
\item $0$ is a regular value of $F$, that is $F'(y_{0},x_{0}):Y\times X\rightarrow Z$
is onto at any point $(y_{0},x_{0})$ such that $F(y_{0},x_{0})=0$. 
\item The map $\pi\circ i:F^{-1}(0)\rightarrow Y$ is $\sigma$-proper,
that is $F^{-1}(0)=\cup_{s=1}^{+\infty}C_{s}$ where $C_{s}$ is a
closed set and the restriction $\pi\circ i_{|C_{s}}$ is proper for
any $s$. \\
Here $i:F^{-1}(0)\rightarrow Y\times X$ is the canonical embedding
and $\pi:Y\times X\rightarrow Y$ is the projection.
\end{enumerate}
then the set $\theta=\left\{ y\in V\ :\ 0\text{ is a regular value of }F(y,\cdot)\right\} $
is a residual subset of $V$, that is $V\smallsetminus\theta$ is
a countable union of closed subsets without interior points.
\end{thm}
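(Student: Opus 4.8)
The plan is to recognise $N:=F^{-1}(0)$ as a $C^{1}$ Banach submanifold of $V\times U$ on which the restriction of the canonical projection $\pi$ is a Fredholm map of index $0$, to identify $\theta$ with the set of regular values of $\pi|_{N}$, and then to invoke the Sard--Smale theorem, with hypothesis (3) playing the role usually played by second countability of the source. The first step is to show that $N$ is a $C^{1}$ Banach submanifold with $T_{(y_{0},x_{0})}N=\ker F'(y_{0},x_{0})$ at each of its points; by the implicit function theorem this reduces to checking that $\ker F'(y_{0},x_{0})$ is topologically complemented in $Y\times X$, equivalently that the surjection $F'(y_{0},x_{0})$ admits a bounded right inverse, and this is exactly where (1) enters. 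Writing $A:=\partial_{x}F(y_{0},x_{0})$, the Fredholm property together with the index being $0$ gives splittings $X=\ker A\oplus X_{1}$ and $Z=\operatorname{Im}A\oplus Z_{1}$ with $\dim\ker A=\dim Z_{1}<\infty$; letting $Q\colon Z\to Z_{1}$ be the projection along $\operatorname{Im}A$, the identity $QA=0$ combined with surjectivity of $F'(y_{0},x_{0})$ forces $y\mapsto Q\,\partial_{y}F(y_{0},x_{0})y$ to carry $Y$ onto the finite-dimensional space $Z_{1}$, and splicing a right inverse of this map with the inverse of the isomorphism $A|_{X_{1}}\colon X_{1}\to\operatorname{Im}A$ produces the desired bounded right inverse of $F'(y_{0},x_{0})$.

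Next I would differentiate $\pi|_{N}$. At $(y_{0},x_{0})$ it is the restriction to $\ker F'(y_{0},x_{0})$ of the projection $Y\times X\to Y$; a direct computation with the splittings above shows that its kernel is $\{0\}\times\ker A$ and that its image equals $\{y\in Y:\partial_{y}F(y_{0},x_{0})y\in\operatorname{Im}A\}=\ker\big(Q\,\partial_{y}F(y_{0},x_{0})\big)$, a closed subspace of $Y$ of codimension $\dim Z_{1}$. Hence $\pi|_{N}$ is a Fredholm map of index $\dim\ker A-\dim Z_{1}=0$. The same computation shows that $(y_{0},x_{0})$ is a regular point of $\pi|_{N}$ precisely when $\operatorname{Im}A=Z$, that is, precisely when $x_{0}$ is a regular point of $F(y_{0},\cdot)$ (the splitting of $\ker A$ being automatic from (1)); consequently $y\in V$ is a regular value of $\pi|_{N}$ if and only if $0$ is a regular value of $F(y,\cdot)$, so that $\theta=\{y\in V:y\text{ is a regular value of }\pi|_{N}\}$.

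Finally I would derive the residuality of $\theta$ from the Sard--Smale argument. Let $\Sigma\subset N$ be the critical set of $\pi|_{N}$, so that $V\setminus\theta=\pi(\Sigma)$, and by (3) write $N=\bigcup_{s\ge1}C_{s}$ with $C_{s}$ closed and $\pi|_{C_{s}}$ proper. Then $\Sigma\cap C_{s}$ is closed, $\pi|_{\Sigma\cap C_{s}}$ is proper, hence $\pi(\Sigma\cap C_{s})$ is closed and $V\setminus\theta=\bigcup_{s}\pi(\Sigma\cap C_{s})$ is a countable union of closed sets. That each $\pi(\Sigma\cap C_{s})$ is nowhere dense is a local matter: near any point of $N$ the index-$0$ Fredholm map $\pi|_{N}$ is, after $C^{1}$ changes of chart, of the form $(\sigma,v)\mapsto(\sigma,\phi(\sigma,v))$ with $\sigma$ ranging in a Banach space and $v,\phi(\sigma,v)\in\mathbb{R}^{k}$, and for fixed $\sigma$ the classical Sard theorem applied to the $C^{1}$ map $\phi(\sigma,\cdot)\colon\mathbb{R}^{k}\to\mathbb{R}^{k}$ shows its critical values meet the slice $\{\sigma\}\times\mathbb{R}^{k}$ in a Lebesgue-null set; since $\pi|_{\Sigma\cap C_{s}}$ is proper, one covers $\Sigma\cap C_{s}\cap\pi^{-1}(y_{0})$ by finitely many such charts and concludes that $\pi(\Sigma\cap C_{s})$ has empty interior near $y_{0}$. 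Thus $V\setminus\theta$ is a countable union of closed nowhere dense sets, as required. (That $F$ is merely $C^{1}$ suffices precisely because the index is $0$.)

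I expect the genuinely delicate step to be this last one: making the reduction to finite-dimensional Sard rigorous — establishing the local normal form for Fredholm maps and then using the $\sigma$-properness in (3) to upgrade the slice-wise "measure zero" statement to an honest countable union of closed nowhere dense subsets of $V$ — rather than the submanifold structure or the index computation, which are essentially linear algebra built on the Fredholm splittings.
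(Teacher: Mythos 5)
The paper does not prove this statement: Theorem \ref{thm:trans} is quoted as a known result from the cited transversality literature (Quinn, Saut--Temam, Uhlenbeck), so there is no in-paper proof to compare against. Your argument is the standard Sard--Smale proof found in those references --- realize $F^{-1}(0)$ as a $C^{1}$ Banach submanifold via the complemented-kernel implicit function theorem, check that $\pi|_{F^{-1}(0)}$ is Fredholm of index $0$ with regular values exactly $\theta$, and use $\sigma$-properness plus the local normal form and finite-dimensional $C^{1}$ Sard to write $V\smallsetminus\theta$ as a countable union of closed nowhere dense sets --- and it is correct, including the observation that $C^{1}$ regularity suffices only because the index is $0$; the one step you compress (that properness lets finitely many normal-form charts control the whole fiber of $\Sigma\cap C_{s}$ over a nearby point) is the standard compactness argument and is correctly identified as the delicate point.
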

At this point we introduce the $C^{1}$ map 
\begin{eqnarray}
F & : & \mathscr{B}_{\rho}\times B(0,R)\rightarrow\mathbb{R}^{n}\nonumber \\
F(k,x) & = & \left.\nabla_{x}H^{\bar{g}+k}(x,t)\right|{}_{(x,0)}\label{eq:F}
\end{eqnarray}
where $H^{\bar{g}+k}(x,t)$ is the mean curvature of $\partial M_{t}$
related to the metric $\bar{g}+k$ at the point $(x,t)$. This map
is $C^{1}$ if $m\ge3$. Moreover, by (\ref{eq:traccia}) and (\ref{eq:escobar})
we have 
\begin{equation}
F(k,x)=-\frac{1}{2}\left.\nabla_{x}\left(\left(\bar{g}+k\right)^{ij}(x,,t)\partial_{t}\left(\bar{g}+k\right)_{ij}(x,,t)\right)\right|_{(x,0)}\label{eq:Fbis}
\end{equation}

\begin{lem}
\label{lem:aperto}The set 
\[
{\mathscr{A}}=\left\{ \begin{array}{c}
g\in\mathscr{M}^{m}:\text{ all the critical points of the}\\
\text{mean curvature of the boundary}\\
\text{of }(M,g)\text{ are nondegenerate}
\end{array}\right\} 
\]
is an open set in $\mathscr{M}^{m}.$\end{lem}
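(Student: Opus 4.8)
The plan is to prove openness by a compactness argument: the condition ``the metric possesses a degenerate critical point of the boundary mean curvature'' is closed under $C^{m}$-convergence of metrics, because $\partial M$ is compact and the mean curvature, together with its first and second derivatives, depends continuously on the metric. The one analytic fact I would record first is this continuity statement: writing $H^{g}\colon\partial M\to\mathbb{R}$ for the mean curvature of $\partial M$ in $(M,g)$ (that is, $H^{g}(x)=H^{g}(x,0)$ in the notation of (\ref{eq:traccia})), the assignment $g\mapsto H^{g}$ is continuous from $\mathscr{M}^{m}$ into $C^{m-1}(\partial M)$, hence into $C^{2}(\partial M)$ since $m\ge3$. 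Indeed, fixing once and for all a finite atlas of $M$ by charts adapted to the boundary (so that $\partial M=\{x_{n+1}=0\}$ locally), the second fundamental form $h_{ij}$ and its trace $H^{g}=(g|_{\partial M})^{ij}h_{ij}$ are, in each such chart, explicit expressions that are smooth functions of the entries $g_{ij}$ and of their first derivatives $\partial_{l}g_{ij}$ on the open set of positive-definite metrics; hence $g\mapsto H^{g}$ loses exactly one derivative and is continuous $C^{m}\to C^{m-1}$. (Equivalently one may read this off (\ref{eq:escobar})--(\ref{eq:Fbis}), at the cost of also recording that the Fermi-coordinate construction depends continuously on $g$; the fixed-atlas version avoids that remark.) In particular $g\mapsto\bigl(\nabla_{x}H^{g},\nabla_{x}^{2}H^{g}\bigr)$ is continuous into $C^{0}$, uniformly on compact subsets of each chart of $\partial M$.

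Now suppose, for contradiction, that $\mathscr{A}$ is not open. Then there exist $g_{0}\in\mathscr{A}$ and a sequence $g_{k}\to g_{0}$ in $\mathscr{M}^{m}$ with $g_{k}\notin\mathscr{A}$, so each $g_{k}$ has a degenerate critical point $\xi_{k}\in\partial M$ of $H^{g_{k}}$: reading in a coordinate chart around $\xi_{k}$, $\nabla_{x}H^{g_{k}}(\xi_{k})=0$ and $\det\bigl(\partial_{x_{i}}\partial_{x_{j}}H^{g_{k}}(\xi_{k})\bigr)=0$ — recall that at a critical point the Hessian is represented by the matrix of second coordinate derivatives and that its degeneracy is a chart-independent condition. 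Since $\partial M$ is compact, after passing to a subsequence we have $\xi_{k}\to\xi_{0}\in\partial M$, and we may take all the $\xi_{k}$ and $\xi_{0}$ inside one chart. Letting $k\to\infty$, the continuity recorded above (uniform $C^{2}$-convergence of $H^{g_{k}}$ to $H^{g_{0}}$ on that chart, together with $\xi_{k}\to\xi_{0}$) yields $\nabla_{x}H^{g_{0}}(\xi_{0})=0$, so $\xi_{0}$ is a critical point of $H^{g_{0}}$, and $\det\bigl(\partial_{x_{i}}\partial_{x_{j}}H^{g_{0}}(\xi_{0})\bigr)=0$. Since $\xi_{0}$ is a critical point, the latter says precisely that $\xi_{0}$ is a \emph{degenerate} critical point of $H^{g_{0}}$, contradicting $g_{0}\in\mathscr{A}$. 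Hence $\mathscr{A}$ is open in $\mathscr{M}^{m}$.

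I expect the only point requiring care to be the first step, i.e.\ pinning down the regularity statement ``$g\mapsto H^{g}$ is continuous with a one-derivative loss'', together with the bookkeeping that $m\ge3$ is exactly what makes $H^{g}$ of class $C^{2}$ and its Hessian vary continuously. Everything else is the standard ``critical points cannot escape a compact manifold and nondegeneracy is an open condition'' argument. In particular the transversality machinery of Theorem \ref{thm:trans} plays no role here; it will enter only in the proof of density, i.e.\ of the second half of Theorem \ref{thm:main}.
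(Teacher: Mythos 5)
Your proof is correct, but it takes a genuinely different route from the paper's. The paper argues directly about a fixed $\bar g\in\mathscr{A}$: since its critical points are nondegenerate and $\partial M$ is compact, they are finite in number $\xi_1,\dots,\xi_\nu$; applying the implicit function theorem to the map $F(k,x)=\nabla_x H^{\bar g+k}(x,0)$ at each $(0,\xi_i)$ gives, for small $k$, a unique critical point $x_i(k)$ near each $\xi_i$, and a separate compactness argument shows no critical points appear away from the $\xi_i$. You instead prove that the complement of $\mathscr{A}$ is closed: if $g_k\to g_0\in\mathscr{A}$ with degenerate critical points $\xi_k$, compactness of $\partial M$ and $C^2$-continuity of $g\mapsto H^g$ (which you correctly anchor to the one-derivative loss and $m\ge3$, and to the chart-independence of Hessian degeneracy at critical points) force a degenerate critical point of $H^{g_0}$ in the limit, a contradiction. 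Your argument is more elementary — no implicit function theorem — and in fact is cleaner on one point: the paper's proof never explicitly observes that the implicitly defined $x_i(k)$ are themselves nondegenerate critical points of $H^{\bar g+k}$ (which follows from continuity of $\partial_x F$ and invertibility of $\partial_x F(0,0)$, but should be said to conclude $\bar g+k\in\mathscr{A}$), whereas your closedness argument needs no such verification. What the paper's IFT approach buys is extra information — a local parametrization $k\mapsto x_i(k)$ of the critical points of nearby metrics — and it reuses the map $F$ that is the engine of the rest of the paper; your version is self-contained but forgoes that structural information.
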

\begin{proof}
If $\bar{g}\in\mathscr{A}$, we have that the critical point of the
mean curvature of $\partial M$ are in a finite number, say $\xi_{1},\dots,\xi_{\nu}$.
We consider the Fermi coordinates in a neighborhood of $\xi_{1}$,
and the map $F$ defined in (\ref{eq:F}) and (\ref{eq:Fbis}).

We have that $F(0,0)=0$ and that $\partial_{x}F(0,0):\mathbb{R}^{n}\rightarrow\mathbb{R}^{n}$
is an isomorphism because $\xi_{1}$ is a nondegenerate critical point.
Thus by the implicit function theorem there exist two positive numbers
$\rho_{1}$ and $R_{1}$ and a unique function $x_{1}(k)$ such that
in $\mathscr{B}_{\rho_{1}}\times B(0,R_{1})$ the level set $\left\{ F(k,x)=0\right\} $
is the graphic of the function $\left\{ x=x_{1}(k)\right\} $. 

We can argue analogously for $\xi_{2},\dots,\xi_{\nu}$, finding constant
$\rho_{2},R_{2},\dots\rho_{\nu},R_{\nu}$ for which the set $\left\{ F(k,x)=0\right\} $
in a neighborhood of $\mathscr{B}_{\rho_{2}}\times B(0,R_{2}),\dots\mathscr{B}_{\rho_{\nu}}\times B(0,R_{\nu})$
can be respectively described by means of the functions $x_{2}(k),\dots x_{\nu}(k)$.

We set $B_{i}=\left\{ \xi\in\partial M\ :\ d_{g}(\xi,\xi_{i})<R_{i}\right\} $.
We claim that there are no critical points of the mean curvature for
the metric $\bar{g}+k$ in the set ${\displaystyle \partial M\smallsetminus\cup_{i=1}^{\nu}B_{i}}$
for any $k\in\mathscr{B}_{\rho}$, provided $\rho$ sufficiently small.
Otherwise we can find a sequence of $\left\{ \rho_{n}\right\} _{n}\rightarrow0$,
a sequence $k_{n}\in\mathscr{B}_{\rho_{n}}$ and a sequence of points
${\displaystyle \xi_{n}\in\partial M\smallsetminus\cup_{i=1}^{\nu}B_{i}}$
such that $F(k_{n},\xi_{n})=0$. But, by compactness of $\partial M$,
$\xi_{n}\rightarrow\bar{\xi}$ for some ${\displaystyle \bar{\xi}\in\partial M\smallsetminus\cup_{i=1}^{\nu}B_{i}}$
and, by continuity of $F$, $\bar{\xi}$ is such that $F(0,\bar{\xi})=0$,
that is a contradiction.

At this point the proof is complete.
\end{proof}

\section{Proof of the main theorem}

We are going to apply the transversality Theorem \ref{thm:trans}
to the map $F$ defined in (\ref{eq:F}). In this case we have $X=Z=\mathbb{R}^{n}$,
$Y=\mathscr{S}^{m}$, $U=B(0,R)$ and $V=\mathscr{B}_{\rho}$ with
$R$ and $\rho$ small enough. Since $X$ and $Z$ are finite dimensional,
it is easy to check that for any $k\in\mathscr{B}_{\rho}$ the map
$x\mapsto F(k,x)$ is Fredholm of index $0$, so assumption (i) holds. 

To prove assumption (ii) we will show, in Lemma \ref{lem:ii}, that,
if the pair $(\tilde{x},\tilde{k})\in B(0,R)\times\mathscr{B}_{\rho}$
is such that $F(\tilde{k},\tilde{x})=0$, the map $F'_{k}(\tilde{k},\tilde{x})$
defined by
\[
k\rightarrow D_{k}\left.\nabla_{x}H^{\bar{g}+\tilde{k}}(x,t)\right|_{(\tilde{x},0)}[k]
\]
 is surjective. 

As far as it concerns assumption (iii) we have that 
\[
F^{-1}(0)=\cup_{s=1}^{\infty}C_{s}\text{ where }C_{s}
=\left\{ \overline{B(0,R-1/s)}\times\overline{\mathscr{B}_{\rho-1/s}}\right\} \cap F^{-1}(0).
\]
It is easy to check that the restriction $\left.\pi\circ i\right|_{C_{s}}$
is proper, that is if the sequence $\left\{ k_{n}\right\} _{n}\subset\overline{\mathscr{B}_{\rho-1/s}}$
converges to $k_{0}$ in $\mathscr{S}^{m}$ and the sequence $\left\{ x_{n}\right\} _{n}\subset\overline{B(0,R-1/s)}$
is such that $F(k_{n},x_{n})=0$ then by compactness of $\overline{B(0,R-1/s)}$
there exists a subsequence of $\left\{ x_{n}\right\} _{n}$ converging
to some $x_{0}\in\overline{B(0,R-1/s)}$ and $F(k_{0},x_{0})=0.$

So we are in position to apply Theorem \ref{thm:trans} and we get
that the set 
\begin{eqnarray*}
A(\bar{\xi},\rho) & = & \left\{ \begin{array}{c}
k\in\mathscr{B}_{\rho}\ :\ F'_{k}(x,k):\mathbb{R}^{n}\rightarrow\mathbb{R}^{n}\text{ is onto}\\
\text{at any point }(x,k)\text{ s.t. }F(x,k)=0
\end{array}\right\} \\
 & = & \left\{ \begin{array}{c}
k\in\mathscr{B}_{\rho}\ :\ F'_{k}(x,k):\mathbb{R}^{n}\rightarrow\mathbb{R}^{n}\text{ is invertible}\\
\text{at any point }(x,k)\text{ s.t. }F(x,k)=0
\end{array}\right\} \\
 & = & \left\{ \begin{array}{c}
k\in\mathscr{B}_{\rho}\ :\text{ the critical points }\xi\in I(\bar{\xi},R)\subset\partial M\\
\text{ of the mean curvature of \ensuremath{\partial M}are non degenerate}
\end{array}\right\} 
\end{eqnarray*}
is a residual subset of $\mathscr{B}_{\rho}$. Since $M$ is compact,
there exists a finite covering $\left\{ I(\xi_{i},R)\right\} _{i=1,\dots l}$
of $\partial M$, where $\xi_{1},\dots\xi_{l}\in\partial M$. For
any index $i$ there exists a residual set $A(\xi_{i},\rho)\subset\mathscr{B}_{\rho}$
such that the critical points of the curvature in $I(\xi_{i},R)$
are non degenerate for any $k\in A(\xi_{i},\rho)$. Let 
\[
A(\rho)=\cap_{i=1}^{l}A(\xi_{i},\rho).
\]
Then the set $A(\rho)$ is a residual set in $\mathscr{B}_{\rho}$.

We now may conclude that, given the metric $\bar{g}$, for any $\rho$
small enough there exists a $\bar{k}\in A(\rho)\subset\mathscr{B}_{\rho}$
such that the critical points of the mean curvature of $\partial M$
related to the metric $\bar{g}+\bar{k}$ are non degenerate. Thus
the set $\mathscr{A}$ defined in (\ref{eq:A}) is a dense set. Moreover,
by Lemma \ref{lem:aperto} we have that $\mathscr{A}$ is open and
the proof of Theorem \ref{thm:main} is complete.

\section{Technical lemmas}

We now prove two technical lemmas in order to obtain assumption (ii)
of the transversality theorem.
\begin{lem}
\label{lem:derivate}For any $x\in B(0,R)\subset\mathbb{R}^{n}$ and
for any $\tilde{k}\in\mathscr{B}_{\rho}\subset\mathscr{S}^{m}$ it
holds
\begin{eqnarray}
F'_{k}(\tilde{k},x)[k] & = & 
\left.(\tilde{g}^{-1}k\tilde{g}^{-1})_{im}\tilde{g}_{ml,s}\tilde{g}^{li}\tilde{g}_{ij,t}\right|_{x,0}
+\left.\tilde{g}^{im}k_{ml,s}\tilde{g}^{lj}\tilde{g}_{ij,t}\right|_{x,0}
+\left.\tilde{g}^{im}\tilde{g}_{ml,s}(\tilde{g}^{-1}k\tilde{g}^{-1})_{lj}\tilde{g}_{ij,t}\right|_{x,0}\nonumber \\
 &  & -\left.\tilde{g}^{im}\tilde{g}_{ml,s}\tilde{g}^{lj}k_{ij,t}\right|_{x,0}
 -\left.(\tilde{g}^{-1}k\tilde{g}^{-1})_{ij}\tilde{g}_{ij,ts}\right|_{x,0}+\left.\tilde{g}^{ij}k_{ij,ts}\right|_{x,0}\label{eq:*}
\end{eqnarray}
\end{lem}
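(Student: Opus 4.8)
The plan is to compute $F'_k(\tilde k,x)[k]$ directly from the explicit formula \eqref{eq:Fbis} for $F$, specialized at the metric $\tilde g:=\bar g+\tilde k$. Writing $F(k,x)=-\tfrac12\nabla_x\big((\tilde g+k)^{ij}\partial_t(\tilde g+k)_{ij}\big)\big|_{(x,0)}$, the derivative in the $k$ direction is obtained by the Leibniz rule applied to the product of the inverse metric and the $t$-derivative of the metric, followed by the spatial gradient $\nabla_x$. The only genuinely nontrivial ingredient is the variation of the inverse: for the path $\tau\mapsto \tilde g+\tau k$ one has $\frac{d}{d\tau}\big|_{\tau=0}(\tilde g+\tau k)^{ij}=-(\tilde g^{-1}k\tilde g^{-1})^{ij}$, which accounts for the terms carrying the factor $(\tilde g^{-1}k\tilde g^{-1})$ and their sign. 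I will use the abbreviations $\tilde g_{ij,s}=\partial_{x_s}\tilde g_{ij}$, $\tilde g_{ij,t}=\partial_t\tilde g_{ij}$, $\tilde g_{ij,ts}=\partial_{x_s}\partial_t\tilde g_{ij}$, matching the notation in the statement.

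Concretely, I would first differentiate the product inside the parentheses: $D_k\big((\tilde g+k)^{im}\partial_t(\tilde g+k)_{im}\big)[k] = -(\tilde g^{-1}k\tilde g^{-1})^{im}\tilde g_{im,t} + \tilde g^{im}k_{im,t}$ (all evaluated on the path, then at $\tau=0$), keeping in mind that the first factor is a genuine matrix inverse so its variation is the conjugated expression above, while the second factor is linear in the metric and contributes simply $k_{im,t}$. Then I would apply $\partial_{x_s}$ to this scalar, again by Leibniz: the $x_s$-derivative hits either the inverse-type factor $(\tilde g^{-1}k\tilde g^{-1})^{im}$ — producing, via $\partial_{x_s}\tilde g^{-1}=-\tilde g^{-1}(\partial_{x_s}\tilde g)\tilde g^{-1}$, the chain of terms with $\tilde g_{ml,s}$ appearing between two inverse metrics — or the derivative-of-metric factor $\tilde g_{im,t}$, producing $\tilde g_{im,ts}$. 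Collecting all contributions and multiplying by $-\tfrac12$ (which is absorbed into the normalization; here it cancels against the factor $2$ hidden in how the author writes the terms, so the displayed right-hand side \eqref{eq:*} has unit coefficients), one arrives at the six terms of \eqref{eq:*}: two from differentiating the $\tilde g^{im}k_{im,t}$ piece ($\tilde g^{im}k_{ml,s}\tilde g^{lj}\tilde g_{ij,t}$ with a relabeling, $-\tilde g^{im}\tilde g_{ml,s}\tilde g^{lj}k_{ij,t}$, and $\tilde g^{ij}k_{ij,ts}$), and the remaining three from differentiating the $(\tilde g^{-1}k\tilde g^{-1})^{im}\tilde g_{im,t}$ piece (two from the two occurrences of $\tilde g^{-1}$ producing $\tilde g_{ml,s}$ between inverses, one $-(\tilde g^{-1}k\tilde g^{-1})_{ij}\tilde g_{ij,ts}$ from the $t$-$s$ mixed derivative).

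The main obstacle is purely bookkeeping: one must be careful with index placement — which indices are contracted against $\tilde g^{-1}$ on the left versus the right, and the exact form of $(\tilde g^{-1}k\tilde g^{-1})$ with both indices down (as written in \eqref{eq:*}) versus up — and with the combinatorics of the product rule, since $\nabla_x$ acting on a triple (respectively quintuple) product of $\tilde g^{-1}$'s, $k$, and $\partial_t\tilde g$ generates several terms that must be matched termwise against \eqref{eq:*} after suitable renaming of dummy indices. No analytic subtlety arises: everything is a finite-dimensional computation valid pointwise at $(x,0)$, legitimate because $m\ge 3$ guarantees the requisite $C^1$ dependence of $F$ on $(k,x)$ and the existence of the mixed derivatives $\tilde g_{ij,ts}$. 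I would present the computation compactly, suppressing the evaluation symbol $|_{x,0}$ until the final line, and remark that formula \eqref{eq:*} is exactly what is needed to verify surjectivity of $F'_k$ in the subsequent Lemma \ref{lem:ii}.
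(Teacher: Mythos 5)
Your proposal is correct and takes essentially the same approach as the paper: a direct Leibniz-rule computation starting from (\ref{eq:Fbis}) together with the variation formula for the inverse metric, $D_k(\tilde g+k)^{-1}\big|_{k=0}=-\tilde g^{-1}k\tilde g^{-1}$; the only cosmetic difference is that you apply $D_k$ before $\partial_{x_s}$, whereas the paper first carries out $\partial_{x_s}$ on $(g+k)^{ij}(g+k)_{ij,t}$ and then linearizes in $k$ via the Neumann expansion (\ref{eq:inv}) --- the two orders commute and yield the same six terms. One small correction: the overall $-\tfrac12$ from (\ref{eq:Fbis}) does not ``cancel against a factor of $2$ hidden in how the author writes the terms''; the right-hand side of (\ref{eq:*}) is in fact $D_k\partial_{x_s}\big((g+k)^{ij}(g+k)_{ij,t}\big)\big|_{\tilde k,x,0}[k]$, i.e.\ $-2F'_k(\tilde k,x)[k]$, and the paper simply suppresses this harmless multiplicative constant since only the surjectivity of $F'_k$ is used in Lemma \ref{lem:ii}.
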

\begin{proof}
We differentiate the identity $g^{im}g_{mj}=\delta_{ij}$, obtaining
\[
\partial_{x_{s}}g^{ij}:=g_{,s}^{ij}=-g^{im}g_{ml,s}g^{lj}
\]
Then we have 
\begin{multline}
\partial_{x_{s}}\left[\left(g+k\right)^{ij}(x,t)\left(g+k\right)_{ij,t}(x,t)\right]=\\
=-\left(g+k\right)^{im}\left(g+k\right)_{ml,s}\left(g+k\right)^{lj}\left(g+k\right)_{ij,t}
+\left(g+k\right)^{ij}\left(g+k\right)_{ij,ts}.\label{eq:gij}
\end{multline}
Here $\partial_{t}g_{ij}:=g_{ij,t}$. We recall that, if $\rho$ is
sufficently small, for any pair $k,\tilde{k}\in\mathscr{B}_{\rho}$,
we have 
\begin{equation}
\left(g+\tilde{k}+k\right)^{-1}=\left(\tilde{g}+k\right)
=\tilde{g}^{-1}-\tilde{g}^{-1}k\tilde{g}^{-1}
+\sum_{\lambda=2}^{\infty}(-1)^{\lambda}(\tilde{g}^{-1}k)^{\lambda}\tilde{g}^{-1}.\label{eq:inv}
\end{equation}
Here $\tilde{g}=g+\tilde{k}$. At this point, by (\ref{eq:gij}) and
by (\ref{eq:inv}), we have 
\begin{multline*}
D_{k}\partial_{x_{s}}\left.\left(\left(g+k\right)^{ij}(x,t)\left(g+k\right)_{ij,t}(x,t)\right)\right|_{\tilde{k},x,0}[k]=\\
=D_{\tilde{g}}\left.\left(\left(\tilde{g}\right)^{ij}(x,t)\left(\tilde{g}\right)_{ij,t}(x,t)\right)\right|_{\tilde{g},x,0}[k]=\\
=\left.(\tilde{g}^{-1}k\tilde{g}^{-1})_{im}\tilde{g}_{ml,s}\tilde{g}^{li}\tilde{g}_{ij,t}\right|_{x,0}
+\left.\tilde{g}^{im}k_{ml,s}\tilde{g}^{lj}\tilde{g}_{ij,t}\right|_{x,0}
+\left.\tilde{g}^{im}\tilde{g}_{ml,s}(\tilde{g}^{-1}k\tilde{g}^{-1})_{lj}\tilde{g}_{ij,t}\right|_{x,0}\\
-\left.\tilde{g}^{im}\tilde{g}_{ml,s}\tilde{g}^{lj}k_{ij,t}\right|_{x,0}
-\left.(\tilde{g}^{-1}k\tilde{g}^{-1})_{ij}\tilde{g}_{ij,ts}\right|_{x,0}+\left.\tilde{g}^{ij}k_{ij,ts}\right|_{x,0}.
\end{multline*}
This concludes the proof\end{proof}
\begin{lem}
\label{lem:ii}For any $(\tilde{x},\tilde{k})$ such that $F(\tilde{k},\tilde{x})=0$
we have that the map
\[
(x,k)\mapsto F'_{k}(\tilde{k},\tilde{x})[k]+F'_{x}(\tilde{k},\tilde{x})x
\]
 is onto on $\mathbb{R}^{n}$.\end{lem}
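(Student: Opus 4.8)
The plan is to show that the linear map $k\mapsto F'_k(\tilde k,\tilde x)[k]$ is already onto $\mathbb{R}^n$, which clearly implies that the combined map $(x,k)\mapsto F'_k(\tilde k,\tilde x)[k]+F'_x(\tilde k,\tilde x)x$ is onto. Working in the Fermi coordinates centred at $\bar\xi$, I will exploit the freedom to choose the perturbation tensor $k$ essentially arbitrarily near the point $(\tilde x,0)$. The key observation is that among the six terms in formula (\ref{eq:*}) of Lemma \ref{lem:derivate}, the last one, $\tilde g^{ij}k_{ij,ts}\big|_{(\tilde x,0)}$, is the only term containing the second-order mixed derivatives $\partial_s\partial_t k_{ij}$; all the other five terms involve only $k_{ij}$ and its \emph{first} derivatives evaluated at $(\tilde x,0)$. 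Hence, once the value of $k$ and its first derivatives at $(\tilde x,0)$ are fixed (say set to $0$), the expression $F'_k(\tilde k,\tilde x)[k]$ reduces to $\tilde g^{ij}(\tilde x,0)\,\partial_s\partial_t k_{ij}(\tilde x,0)$ for $s=1,\dots,n$, and we have complete freedom in prescribing the numbers $\partial_s\partial_t k_{ij}(\tilde x,0)$.

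The concrete construction proceeds as follows. Fix a target vector $v=(v_1,\dots,v_n)\in\mathbb{R}^n$. Choose a smooth bump function $\chi$ supported in a small neighbourhood of $\tilde x$ in $B(0,R)$ with $\chi\equiv 1$ near $\tilde x$, and define the symmetric $2$-tensor $k$ by prescribing, in these coordinates,
\[
k_{ij}(x,t)=\chi(x)\,\chi_0(t)\,t\sum_{s=1}^{n}c^{(s)}_{ij}(x_s-\tilde x_s),
\]
where $\chi_0$ is a cutoff in $t$ equal to $1$ near $t=0$, and the constant symmetric matrices $c^{(s)}_{ij}$ are to be chosen. With this choice $k$ and all its first derivatives vanish at $(\tilde x,0)$ (the factor $t$ kills everything except $\partial_t$, and the factor $(x_s-\tilde x_s)$ in each summand together with the explicit $t$ means $\partial_s\partial_t k_{ij}(\tilde x,0)=c^{(s)}_{ij}$), so that Lemma \ref{lem:derivate} gives
\[
F'_k(\tilde k,\tilde x)[k]\big|_{s}=\tilde g^{ij}(\tilde x,0)\,c^{(s)}_{ij}.
\]
Now simply take $c^{(s)}_{ij}=v_s\,\tilde g_{ij}(\tilde x,0)$; since $\tilde g^{ij}\tilde g_{ij}=n$ at the point, this yields $F'_k(\tilde k,\tilde x)[k]|_s = n\,v_s$, so after rescaling $k$ by $1/n$ we hit $v$ exactly. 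One must also check that $k$ can be taken in $\mathscr{B}_\rho$: this is automatic because $v$ is fixed and we may shrink the support of $\chi$, $\chi_0$, making $\|k\|_m$ as small as we like (the derivatives of $k$ up to order $m$ are bounded in terms of $v$ and the chosen cutoffs, and shrinking supports does not help with derivatives — so instead one fixes the cutoffs and rescales; surjectivity of a linear map only requires that the image be all of $\mathbb{R}^n$, and since $\mathscr{B}_\rho$ is a neighbourhood of $0$ in the domain, the image of $F'_k(\tilde k,\tilde x)$ restricted to $\mathscr{B}_\rho$ spans $\mathbb{R}^n$, which is what onto means for the linear map $F'_k(\tilde k,\tilde x):\mathscr{S}^m\to\mathbb{R}^n$).

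The main obstacle, and the only place where care is genuinely needed, is the bookkeeping that confirms all five ``lower-order'' terms in (\ref{eq:*}) really do vanish for the constructed $k$ — in particular that the terms involving $k_{ml,s}$ and $k_{ij,t}$ (first derivatives of $k$) are killed at $(\tilde x,0)$ by the vanishing of the appropriate factor. Given the product structure $k_{ij}(x,t)=t\,(x_s-\tilde x_s)(\cdots)$ this is immediate: $k_{ij,t}$ still carries a factor $(x_s-\tilde x_s)$ vanishing at $\tilde x$, and $k_{ml,s}$ still carries a factor $t$ vanishing at $t=0$, so each of the five terms contains a factor that is zero at $(\tilde x,0)$. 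Hence $F'_k(\tilde k,\tilde x)[k]$ collapses to the single term $\tilde g^{ij}k_{ij,ts}$ as claimed, and surjectivity follows. This completes the proof of Lemma \ref{lem:ii}, and with it assumption (ii) of the transversality Theorem \ref{thm:trans}.
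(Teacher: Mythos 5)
Your proof is correct and follows essentially the same strategy as the paper: reduce to showing $F'_k(\tilde k,\tilde x)$ alone is onto, choose $k$ vanishing to first order at $(\tilde x,0)$ so that only the highest term $\tilde g^{ij}k_{ij,ts}$ in formula (\ref{eq:*}) survives, and then observe the second mixed derivatives $k_{ij,ts}(\tilde x,0)$ can be prescribed freely. The small differences are cosmetic: the paper first passes to exponential coordinates centered at $\tilde\xi$ so that $\tilde g_{ij}(0,0)=\delta_{ij}$ and then uses the single-entry choice $k_{11}(x,t)=x_1t$ to hit $e_1$, while you keep the metric general and compensate by taking $c^{(s)}_{ij}=v_s\,\tilde g_{ij}(\tilde x,0)$, absorbing the resulting factor $\tilde g^{ij}\tilde g_{ij}=n$ by a rescaling. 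You are also more explicit than the paper about turning the local coordinate expression into a globally defined element of $\mathscr{S}^m$ via cutoffs and about why no smallness of $k$ is needed (surjectivity of the derivative concerns the image of the full Banach space $\mathscr{S}^m$, not of $\mathscr{B}_\rho$); both points are implicitly assumed in the paper's version. Neither difference affects the validity of the argument.
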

\begin{proof}
Let $(\tilde{x},\tilde{k})$ such that $F(\tilde{k},\tilde{x})=0$.
To obtain our claim it is sufficient to prove that the map 
$F'_{k}(\tilde{k},\tilde{x})[k]:\mathscr{S}^{m}\rightarrow\mathbb{R}^{n}$
is onto. More precisely, we are going to prove that, given $e_{1},\dots,e_{n}$
the canonical base in $\mathbb{R}^{n}$, for any $\nu=1,\dots,n$
there exists $k\in\mathscr{S}^{m}$ such that $F'_{k}(\tilde{k},\tilde{x})[k]=e_{\nu}.$
We remark that the ontoness is independent from the choice of the
coordinates, so, for any $\tilde{\xi}=(\tilde{x},0)\in\partial M$
we choose the exponential coordinates in $\partial M$ with metric
$g+\tilde{k}$ centered in $\tilde{\xi}$, so we have to prove simply
that, given $\nu$, there exists $k\in\mathscr{S}^{m}$ such that
\begin{eqnarray*}
D_{k}\partial_{x_{\nu}}\left.\left(\left(g+k\right)^{ij}(x,t)\left(g+k\right)_{ij,t}(x,t)\right)\right|_{\tilde{k},x,0}[k] & = & 1\\
D_{k}\partial_{x_{s}}\left.\left(\left(g+k\right)^{ij}(x,t)\left(g+k\right)_{ij,t}(x,t)\right)\right|_{\tilde{k},x,0}[k] & = & 0
\end{eqnarray*}
for all $s\ne\nu$. 

We use (\ref{eq:*}) of Lemma \ref{lem:derivate}. Using Fermi coordinates
we have that the metric $\tilde{g}(\cdot,0)$ on the submanifold $\partial M$
has the form $\tilde{g}_{ij}(0,0)=\delta_{ij},$ thus 
\begin{multline}
D_{k}\partial_{x_{s}}\left.\left(\left(g+k\right)^{ij}(x,t)\left(g+k\right)_{ij,t}(x,t)\right)\right|_{\tilde{k},0,0}[k]=\\
\left.k_{im}\tilde{g}_{mj,s}\tilde{g}_{ij,t}\right|_{0,0}
-\left.k_{ij,s}\tilde{g}_{ij,t}\right|_{0,0}+\left.\tilde{g}_{il,s}k_{lj}\tilde{g}_{ij,t}\right|_{0,0}\\
-\left.\tilde{g}_{ij,s}k_{ij,t}\right|_{0,0}-\left.k_{ij}\tilde{g}_{ij,ts}\right|_{0,0}+\left.k_{ii,ts}\right|_{0,0}\label{eq:**}
\end{multline}

Now, we choose $k\in\mathscr{S}^{m}$ such that the map $x\mapsto k_{ij}(x,0)$
vanishes at $x=0$ for all $i,j$. Then by (\ref{eq:**}) we have
\begin{multline*}
D_{k}\partial_{x_{s}}\left.\left(\left(g+k\right)^{ij}(x,t)\left(g+k\right)_{ij,t}(x,t)\right)\right|_{\tilde{k},0,0}[k]=\\
-\left.k_{ij,s}\tilde{g}_{ij,t}\right|_{0,0}-\left.\tilde{g}_{ij,s}k_{ij,t}\right|_{0,0}+\left.k_{ii,ts}\right|_{0,0}
\end{multline*}
Moreover we assume the the first derivatives $k_{ij,s}(0,0)=k_{ij,t}(0,0)=0$
for all $i,j$. Then
\begin{equation}
D_{k}\partial_{x_{s}}\left.\left(\left(g+k\right)^{ij}(x,t)\left(g+k\right)_{ij,t}(x,t)\right)\right|_{\tilde{k},0,0}[k]
=\left.k_{ii,ts}\right|_{0,0}.\label{eq:****}
\end{equation}
We now prove the claim for $\nu=1$. Let us choose $k\in\mathscr{S}^{m}$
such that 
\begin{eqnarray*}
k_{11}(x,t)=x_{1}t; &  & k_{ij}(x,t)=0
\end{eqnarray*}
 for $(i,j)\ne(1,1)$. Thus (\ref{eq:****}) rewrites as 
\begin{eqnarray*}
D_{k}\partial_{x_{1}}\left.\left(\left(g+k\right)^{ij}(x,t)\left(g+k\right)_{ij,t}(x,t)\right)\right|_{\tilde{k},0,0}[k] 
& = & \left.k_{11,ts}\right|_{0,0}=1\\
D_{k}\partial_{x_{s}}\left.\left(\left(g+k\right)^{ij}(x,t)\left(g+k\right)_{ij,t}(x,t)\right)\right|_{\tilde{k},0,0}[k] 
& = & \left.k_{ss,ts}\right|_{0,0}=0
\end{eqnarray*}
for $s=2,\dots,n$. Analogously we proceed for any $\nu=2,\dots,n$
and we conclude the proof.\end{proof}

\end{document}